\newcommand{\ud}[0]{\,\mathrm{d}}
\newcommand{\abs}[1]{|#1|}
\newcommand{\Babs}[1]{\Big|#1\Big|}
\newcommand{\Norm}[2]{\|#1\|_{#2}}
\newcommand{\BNorm}[2]{\Big\|#1\Big\|_{#2}}
\newcommand{\pair}[2]{\langle #1,#2 \rangle}
\newcommand{\Bpair}[2]{\Big\langle #1,#2 \Big\rangle}
\newcommand{\ave}[1]{\langle #1\rangle}
\newcommand{\bddlin}[0]{\mathscr{L}}
\newcommand{\BMO}[0]{\operatorname{BMO}}
\newcommand{\supp}[0]{\operatorname{supp}}
\newcommand{\loc}[0]{\operatorname{loc}}
\newcommand{\R}{\mathbb{R}}
\newcommand{\C}{\mathbb{C}}
\newcommand{\Z}{\mathbb{Z}}
\renewcommand{\P}[0]{\mathbb{P}}
\newcommand{\E}[0]{\mathbb{E}}
\newcommand{\D}[0]{\mathbb{D}}
\newcommand{\eps}[0]{\varepsilon}
\swapnumbers \numberwithin{equation}{section}
\theoremstyle{plain}
\newtheorem{theorem}[equation]{Theorem}
\theoremstyle{definition}
\theoremstyle{remark}
\newtheorem{remark}[equation]{Remark}
\begin{document}

\title[A symmetric $T(1)$ theorem]{An operator-valued $T(1)$ theorem for symmetric singular integrals in UMD spaces}

\author{Tuomas Hyt\"onen}
\address{Department of Mathematics and Statistics, P.O.B.~68 (Pietari Kalmin katu 5), FI-00014 University of Helsinki, Finland}
\email{tuomas.hytonen@helsinki.fi}

%\date{\today}

\thanks{The author was supported by the Academy of Finland through project Nos. 307333 (Centre of Excellence in Analysis and Dynamics Research) and 314829 (Frontiers of singular integrals).}
\keywords{Calder\'on--Zygmund operator, $T(1)$ theorem, operator-valued, UMD}
\subjclass[2010]{42B20, 46E40}
% 42B20 Singular and oscillatory integrals (Calder\'on-Zygmund, etc.) 
% 42B25 Maximal functions, Littlewood-Paley theory
% 42B35 Function spaces arising in harmonic analysis 

% 46B09 Probabilistic methods in Banach space theory
% 46E40 Spaces of vector- and operator-valued functions
% 47A60 Functional calculus
% 47F05 Partial differential operators
% 60G46 Martingales and classical analysis

\maketitle

%\vspace*{-2mm}

\begin{abstract}
The natural BMO (bounded mean oscillation) conditions suggested by scalar-valued results are known to be insufficient for the boundedness of operator-valued paraproducts. Accordingly, the boundedness of operator-valued singular integrals has only been available under
versions of the classical ``$T(1)\in\BMO$'' assumptions that are not easily checkable.
Recently, Hong, Liu and Mei (J. Funct. Anal. 2020) observed that the situation improves remarkably for singular integrals with a symmetry assumption, so that a classical $T(1)$ criterion still guarantees their $L^2$-boundedness on Hilbert space -valued functions. Here, these results are extended to general UMD (unconditional martingale differences) spaces with the same natural BMO condition for symmetrised paraproducts, and requiring in addition only the usual replacement of uniform bounds by $R$-bounds in the case of general singular integrals. In particular, under these assumptions, we obtain boundedness results on non-commutative $L^p$ spaces for all $1<p<\infty$, without the need to replace the domain or the target by a related non-commutative Hardy space as in the results of Hong et al. for $p\neq 2$.
\end{abstract}

%\tableofcontents

\section{Introduction}

Paraproducts are central and widespread in modern harmonic analysis. Their systematic introduction is due to J.-M. Bony \cite{Bony:81} in the context of symbolic calculus for non-linear partial differential equations but, as argued by \cite{BMV}, ``the first version of a paraproduct is [already] implicit in A. P. Calder\'on's work on commutators \cite{Calderon:65}''. We refer the reader to \cite{BMV} for a friendly introduction to the variety of objects that now go under the generic name ``paraproduct'', and some further applications. Our present interest is in the  {\em dyadic} version of paraproducts $\Pi_b$, and in their role in the boundedness of general singular integral operators, a connection revealed by the celebrated $T(1)$ and $T(b)$ theorems of G.~David, J.-L. Journ\'e and S. Semmes \cite{DJ:T1,DJS:Tb}.

More specifically, we are interested in the matrix/operator-valued versions of these objects and results. Matrix-valued paraproducts (disguised as matrix-valued Carleson embeddings) appeared in the work of S. Treil and A. Volberg \cite{TV:angle}, motivated by questions in multivariate stationary processes. Independently, these authors with F. Nazarov  \cite{NTV:ex} on the one hand, and N. H. Katz \cite{Katz:parap} on the other hand, obtained the dimensional dependence in the key inequality
\begin{equation}\label{eq:logParap}
  \Norm{b}{\BMO^d_{\rm so}(\R;\bddlin(H))}\leq\Norm{\Pi_b}{\bddlin(L^2(\R;H))} \leq c(1+\log \dim H)\Norm{b}{\BMO^d_{\rm so}(\R;\bddlin(H))}, 
\end{equation}
where $H$ is a finite-dimensional Hilbert space, $\bddlin(H)$ is the space of bounded linear operators acting on this space, $\Pi_b$ is the dyadic paraproduct associated with the $\bddlin(H)$-valued function $b$, and
\begin{equation}\label{eq:BMOso}
  \Norm{b}{\BMO^d_{\rm so}(\R;\bddlin(H))} 
  :=\sup_{\substack{x\in H \\ \Norm{x}{}\leq 1}}\sup_{\substack{ I\subset\R \\ \textup{interval}}}\frac{1}{\abs{I}}\int_I\Norm{(b(t)-\ave{b}_I)x}{H}\ud t
\end{equation}
is the strong-operator (dyadic) bounded mean oscillation norm. Some time later, Nazarov, Pisier, Treil and Volberg \cite{NPTV} proved the sharpness of $\log \dim H$ in \eqref{eq:logParap}; the necessity of some dimensional growth was already contained in \cite{NTV:ex}. (In \eqref{eq:BMOso}, we have followed the notation ``$\BMO_{\rm so}$'' as used e.g. in \cite{NPTV}, but this is not universal; some other related papers also incorporate the norm of the pointwise adjoint function $t\mapsto b(t)^*$ in this notation.)

For a while, there were hopes in the air of achieving a dimension-free bound by replacing $\BMO_{\rm so}$ on the right of \eqref{eq:logParap} by the larger uniform-operator BMO norm
\begin{equation*}
   \Norm{b}{\BMO_d(\R;\bddlin(H))}
   := \sup_{\substack{ I\subset\R \\ \textup{interval}}}\frac{1}{\abs{I}}\int_I\Norm{b(t)-\ave{b}_I}{\bddlin(H)}\ud t
\end{equation*}
but also this was ruled out by T. Mei \cite{Mei:parap} by showing that, even in the bound
\begin{equation*}
  \Norm{\Pi_b}{\bddlin(L^2(\R;H))} \leq \phi(\dim H)\Norm{b}{L^\infty(\R;\bddlin(H))},
\end{equation*}
with the smaller function space $L^\infty$ in place of $\BMO_d$ on the right, the dimensional dependence cannot be better than $\phi(d)\geq c(1+\log d)$.

In particular, this severely sets back the hopes of developing such estimates in infinite-dimensional Hilbert spaces, not to mention more general Banach spaces. This failure in infinite dimensions of classical bounds between various BMO-type quantities on the one hand, and the norms of related transformations on the other hand, has been further elaborated by Blasco and Pott \cite{BlaPo:08,BlaPo:10} and, for analogous questions dealing with the (complex-)analytic BMOA and related operators, quite recently by Rydhe \cite{Rydhe:17}.

\smallskip

As mentioned, one of the major applications of paraproducts is their role in the characterisation of boundedness of general (non-convolution type) singular integral operators via the $T(1)$ and $T(b)$ theorems of David, Journ\'e and Semmes \cite{DJ:T1,DJS:Tb}, as well as their many extensions. The above-discussed problems of describing the boundedness of paraproducts in the infinite-dimensional setting, in terms of accessible function space norms, have been a major obstacle on the way of achieving a fully satisfactory analogue of the general theory of singular integrals in infinite-dimensional Banach spaces; the available versions of the operator-valued $T(1)$ and $T(b)$ theorems  -- \cite{HH:2016,Hytonen:Tb,HW:2006} and their extensions -- suffer from complicated and not easily verifiable variants of BMO conditions that are only distant cousins of their simple classical predecessors. 

In contrast, verifiable conditions for the boundedness of operator-valued singular integrals of {\em convolution type},
\begin{equation*}
  Tf(x)=\int_{\R^d}K(x-y)f(y)\ud y,
\end{equation*}
are well understood since the work of L.~Weis \cite{Weis:2001} via their equivalent description as operator-valued Fourier multipliers $\widehat{Tf}=\hat{K}\hat{f}$ (but see also \cite{HW:2007} for a singular integral point of view to the same operators). Results on the boundedness of these operator-valued convolution-type singular integrals have profound applications to regularity problems for autonomous evolution equations; see again \cite{Weis:2001} and the many works citing this influential paper.

Analogous questions for non-autonomous equations give rise to singular integrals of non-convolution type,
\begin{equation*}
  Tf(x)=\int_{\R^d}K(x,y)f(y)\ud y,
\end{equation*}
which in principle should belong to the scope of the (operator-valued) $T(1)$ and $T(b)$ theorems \cite{HH:2016,Hytonen:Tb,HW:2006}. However, the complicated form of their conditions has so far hindered such applications. In a recent paper  \cite[p. 535]{GV:2017}, the authors explicitly write:  ``At the moment we do not know whether the $T1$-theorem and $Tb$-theorem can be applied to study maximal $L^p$-regularity for the time dependent problems we consider.''
Nevertheless, the authors of \cite{GV:2017} manage to obtain the $L^p$-boundedness for a special class of operator-valued non-convolution singular integrals suitable for their needs. This indicates a continuing demand for checkable criteria for the boundedness of at least special classes of non-convolution operators, as long as the full analogue of the scalar-valued $T(1)$ and $T(b)$ theorems seems out of reach.

In a recent work \cite{HLM}, Hong, Liu and Mei achieve such a result, in the very style of a $T(1)$ theorem, for operator-valued singular integrals with a certain {\em symmetry} assumption, satisfied in particular by all even operators. Under natural assumptions, their result gives the $L^p(\R^d;X)$-boundedness of these operators when $p=2$ and $X=H$ is a Hilbert space, and a weaker substitute result (replacing either the domain or the target with a suitable non-commutative Hardy space) when $p\in(1,\infty)\setminus\{2\}$ and $X$ is non-commutative $L^p$-space. (Incidentally, a symmetry condition was also key to another recent advance on vector-valued singular integrals concerning the possible linear dependence of singular integral and martingale transform norms, a problem that was solved for even singular integrals by Pott and Stoica \cite{PS:2014} but remains open in general.)

In this paper, we obtain an extension of the Hong--Liu--Mei \cite{HLM} result to all Banach spaces $X$ with the unconditionality property of martingale differences (UMD). Our result is a pure $L^p$ estimate for all $p\in(1,\infty)$, without the need of substitute Hardy spaces, and it is obtained in the maximal generality of Banach spaces (namely, UMD spaces) in which such results could be hoped for. Indeed, the Beurling--Ahlfors transform 
\begin{equation*}
  Tf(z)=-\operatorname{p.v.}\frac{1}{\pi}\int_{\C}\frac{f(y)}{(z-y)^2}\ud y,
\end{equation*}
where the integration is with respect to the two-dimensional Lebesgue measure on $\C\eqsim\R^2$, is an even singular integral operator in the scope of Theorem \ref{thm:T1intro} below, whose boundedness on $L^p(\R^2;X)$ is {\em equivalent} to $X$ being a UMD space by \cite{GMS}.

Our main result can be roughly stated as follows; 
see Theorem \ref{thm:TbSymm} for a detailed formulation of the various technical assumptions appearing in the statement.

\begin{theorem}[Symmetric $T(1)$ theorem]\label{thm:T1intro}
Let $X$ be a UMD space, $\bddlin(X)$ the space of bounded linear operators on $X$, and $p\in(1,\infty)$. Let
\begin{equation*}
  Tf(x)=\int_{\R^d}K(x,y)f(y)\ud y
\end{equation*}
be a Calder\'on--Zygmund operator with an $\bddlin(X)$-valued kernel $K$, acting on $X$-valued test functions $f$. Suppose that $T$ and $K$ satisfy $R$-bounded versions of the Calder\'on--Zygmund kernel estimates and the weak boundedness property as defined in Section \ref{sec:T1setup}.
Finally, suppose that
\begin{equation}\label{eq:T1symm}
  T1=(T^*1)^*\in\BMO(\R^d;\bddlin(X)),
\end{equation}
Then $T$ extends to a bounded linear operator on $L^p(\R^d;X)$.
\end{theorem}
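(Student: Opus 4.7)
The plan is to reduce, by the classical paraproduct-subtraction template, to two independent estimates: one for an operator with vanishing paraproducts (handled by the operator-valued dyadic representation machinery and the $R$-bounded kernel/WBP hypotheses), and one for the symmetrised paraproduct itself (the genuinely new ingredient that the symmetry hypothesis $T1=(T^*1)^*$ is designed to accommodate). Setting $b:=T1$, so that $T^*1=b^*$ by symmetry, I would write
\[
  T = T_0 + \Pi_b + \Pi_{b^*}^*, \qquad T_0 := T - \Pi_b - \Pi_{b^*}^*,
\]
where $\Pi_b f:=\sum_I(\Delta_I b)\langle f\rangle_I$ is the standard dyadic paraproduct associated to some (eventually randomised) dyadic system. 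By construction $T_0 1 = T_0^*1 = 0$, and $T_0$ inherits from $T$ the $R$-bounded kernel estimates and weak boundedness property up to corrections of the same type, so that the ``zero-paraproduct'' branch of the operator-valued $T(1)$ machinery applies.

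For $T_0$ I would invoke the operator-valued/UMD dyadic representation theorem of \cite{HH:2016,HW:2006} and its extensions: after averaging over randomised dyadic grids, $T_0$ is written as a convergent series of dyadic shifts $S^\omega_{i,j}$ whose norms decay geometrically in $\max(i,j)$. The $R$-bounded forms of the kernel and WBP estimates give $L^p(\R^d;X)$-bounds for the individual shifts uniform in all parameters --- this is precisely where $R$-boundedness replaces the scalar uniform bounds --- and the geometric decay produces a summable series. The whole content of the theorem is thus pushed into the new estimate
\[
  \Norm{\Pi_b+\Pi_{b^*}^*}{\bddlin(L^p(\R^d;X))} \lesssim \Norm{b}{\BMO(\R^d;\bddlin(X))},
\]
where it is essential that the symmetric combination appears: $\Pi_b$ alone already fails such a bound under the natural $\BMO$ norm by \cite{Mei:parap,NPTV}.

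To prove the symmetrised paraproduct estimate, I would test against $f\in L^p(\R^d;X)$ and $g\in L^{p'}(\R^d;X^*)$ and expand both in Haar series. The symmetry is then exploited to show that the ``bad'' Haar interactions --- those responsible for the dimensional blow-up of $\Pi_b$ alone --- cancel between $\Pi_b$ and $\Pi_{b^*}^*$, leaving a bilinear form of the schematic shape $\sum_I\pair{(\Delta_I b)\langle f\rangle_I}{\Delta_I g}$ together with its mirror. Each piece can then be controlled by a Stein-type inequality, the UMD square-function bound applied to $f$ and to $g$, and $\BMO$--$H^1$ duality, yielding the stated estimate with the natural $\BMO$ constant and no dependence on $\dim X$.

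The principal obstacle is this symmetrised-paraproduct step. The cancellation between $\Pi_b$ and $\Pi_{b^*}^*$ is real but subtle: because $\bddlin(X)$ is non-commutative, the order of $(\Delta_I b)$ and $\langle f\rangle_I$ matters, and the cancellation only becomes manifest at the level of the bilinear form. In addition one must carefully verify the compatibility of the Haar expansion with the pointwise adjunction $b\mapsto b^*$ across the $X$-valued and $X^*$-valued sides --- a point hidden by self-duality in the Hilbert-space case of \cite{HLM} but requiring genuine care in a general UMD space. Once this symmetrised-paraproduct lemma is in place, combining it with the dyadic representation bound for $T_0$ yields Theorem \ref{thm:T1intro}.
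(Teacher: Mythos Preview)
Your overall architecture matches the paper's: the argument reduces to the dyadic representation/shift bounds from \cite{HH:2016} (which consume the $R$-bounded kernel and WBP hypotheses) plus the symmetrised-paraproduct estimate $\Norm{\Lambda_b}{\bddlin(L^p(\R^d;X))}\lesssim\Norm{b}{\BMO}$, which is indeed the heart of the result.

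One structural remark: the paper applies the representation theorem of \cite{HH:2016} directly to $T$, obtaining the paraproducts $\Pi_{b_1}^{\mathscr D^\omega}+(\Pi_{b_2^*}^{\mathscr D^\omega})^*$ as \emph{output} terms (with respect to the random grid $\mathscr D^\omega$), after which the symmetry $b_1=b_2=b$ collapses this to $\Lambda_b^{\mathscr D^\omega}$. Your proposed order---subtract a fixed dyadic paraproduct first, then run the representation on $T_0$---runs into the nuisance that dyadic paraproducts do not have Calder\'on--Zygmund kernels, so $T_0$ would not literally meet the hypotheses of the representation theorem; the paper's ordering sidesteps this.

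More importantly, your account of the $\Lambda_b$ estimate invokes the wrong mechanism. There is no cancellation between $\Pi_b$ and $\Pi_{b^*}^*$; their bilinear forms simply add to give $\sum_k\pair{b}{\E_{k-1}f\otimes\D_k g+\D_k f\otimes\E_{k-1}g}$, a pairing of $b\in\BMO(\R^d;\bddlin(X))$ against a function with values in the \emph{projective tensor product} $X\hat\otimes X^*$. The key is the Leibniz-type identity
\[
  \E_{k-1}f\otimes\D_k g+\D_k f\otimes\E_{k-1}g
  =\big(\E_k f\otimes\E_k g-\E_{k-1}f\otimes\E_{k-1}g\big)-\D_k f\otimes\D_k g,
\]
which telescopes the partial sums to $\E_K f\otimes\E_K g-\sum_{k\le K}\D_k f\otimes\D_k g$. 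One then bounds the $H^1_d(\R^d;X\hat\otimes X^*)$-norm of this (Doob's inequality for the first piece, Rademacher randomisation plus the UMD property of $X$ and $X^*$ for the second) and applies the $H^1$--$\BMO$ duality \eqref{eq:H1BMOduality} for the dual pair $(X\hat\otimes X^*,\bddlin(X))$. The projective tensor product, absent from your sketch, is precisely the device that substitutes for the operator product available in the noncommutative-$L^p$ argument of \cite{HLM}; no Stein-type inequality is used.
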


We stress that all other assumptions of Theorem \ref{thm:T1intro} are essentially the same as in any other operator-valued $T(1)$ theorem in the literature (like \cite{HH:2016,Hytonen:Tb,HW:2006}), and the key novelty is the condition \eqref{eq:T1symm} inspired by \cite{HLM}. This improves on all previous results on the level of general UMD spaces by means of replacing their more complicated BMO-type spaces by the plain $\BMO(\R^d;\bddlin(X))$, which is just the classical BMO space with absolute values replaced the norm in $\bddlin(X)$; it achieves this at the cost of requiring the additional symmetry imposed by the equality in \eqref{eq:T1symm}. Unfortunately, Theorem \ref{thm:T1intro} still lacks a key feature of the classical $T(1)$ theorems: a {\em characterisation} of $L^p$-boundedness. The condition that $T1\in\BMO(\R^d;\bddlin(X))$ is a relatively checkable {\em sufficient} condition, but it is still {\em not necessary}.

As with the other assumptions, we refer the reader to Section \ref{sec:T1setup} for a precise interpretation of the condition \eqref{eq:T1symm}; however, the following formal explanation may be helpful at this point: In \eqref{eq:T1symm}, the (formal) action of $T$ on the constant scalar function $1$ is an $\bddlin(X,Y)$-valued function, and part of the assumption \eqref{eq:T1symm} is to require that this function has bounded mean oscillation. Moreover, $T^*$ refers to the formal adjoint operator
\begin{equation*}
  T^* g(x)=\int_{\R^d} K(y,x)^* g(y)\ud y,
\end{equation*}
whose kernel $K(y,x)^*$ takes values in $\bddlin(X^*)$ and acts on test functions $g$ with values in $X^*$. The formal action of $T^*$ on the constant scalar function $1$ is an $\bddlin(X^*)$-valued function $T^*1$, and $(T^*1)^*$ refers to its pointwise adjoint, an $\bddlin(X^{**})=\bddlin(X)$-valued function. (Note that UMD spaces are reflexive, see \cite[Theorem 4.3.3.]{HNVW1}.) Thus $T1$ and $(T^*1)^*$ are (at least formally) functions of the same type, and another part of the assumption \eqref{eq:T1symm} is to require that they are equal.

A main ingredient of Theorem \ref{thm:T1intro}, related to the key condition \eqref{eq:T1symm}, is the following bound of independent interest for the symmetrised paraproduct
\begin{equation*}
  \Lambda_b:=\Pi_b+\Pi_{b^*}^*;
\end{equation*}
the precise definition of these operators and a slightly more general statement will be given in Section \ref{sec:parap}.

\begin{theorem}[Boundedness of symmetrised paraproducts]\label{thm:parapIntro}
Let $X$ be a UMD space and $b\in\BMO_d(\R^d;\bddlin(X))$. Then the symmetrised paraproduct $\Lambda_b$ extends to a bounded linear operator on $L^p(\R^d;X)$ with the bound
\begin{equation*}
  \Norm{\Lambda_b}{\bddlin(L^p(\R^d;X))}
  \leq c_d\beta_{p,X}^2\Norm{b}{\BMO(\R^d;\bddlin(X))},
\end{equation*}
where $\beta_{p,X}$ is the UMD constant of $X$ and $c_d$ is dimensional.
\end{theorem}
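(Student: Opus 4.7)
My plan is to expand $\Lambda_b$ in dyadic martingale differences, pair against a test function, and reduce to an $H^1$--$\BMO$ duality by exploiting the symmetric structure. Write $\Delta_k := E_k - E_{k-1}$ for the dyadic martingale differences on $\R^d$. The paraproduct admits the standard representation $\Pi_b f = \sum_k (\Delta_k b)(E_{k-1}f)$, and computing its dual counterpart via the pointwise identity $(\Delta_k b^*) = (\Delta_k b)^*$ together with the self-adjointness of $E_{k-1}$ yields
\begin{equation*}
  \Pi_{b^*}^* f = \sum_k E_{k-1}\!\bigl[(\Delta_k b)(\Delta_k f)\bigr].
\end{equation*}
Pairing with $g \in L^{p'}(\R^d;X^*)$ and applying martingale orthogonality on both sides,
\begin{equation*}
  \pair{\Lambda_b f}{g}
  = \sum_k \int \bigl[\pair{(\Delta_k b)(E_{k-1}f)}{\Delta_k g} + \pair{(\Delta_k b)(\Delta_k f)}{E_{k-1}g}\bigr].
\end{equation*}
Each integrand is a pointwise trilinear pairing with $|\pair{Tu}{v}|\leq \|T\|_{\bddlin(X)}\|u\|_X \|v\|_{X^*}$, so only the \emph{uniform-operator} norm of $\Delta_k b$ enters -- a crucial reduction that is not available for the non-symmetric $\Pi_b$ alone.

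Next, I would exploit UMD by introducing independent Rademacher sign sequences for the martingale differences of $f$ and of $g$, each paying one factor $\beta_{p,X}$ (recall that $X^*$ is also UMD, with the same constant), thereby producing the factor $\beta_{p,X}^2$. After this decoupling, the remaining estimate takes, via operator-valued $H^1$--$\BMO$ duality, the form of an $H^1$-estimate for the symmetric bilinear ``mixed paraproduct''
\begin{equation*}
  \Phi(f,g) := \sum_k\bigl[E_{k-1}f\otimes \Delta_k g + \Delta_k f\otimes E_{k-1}g\bigr],
\end{equation*}
whose summands have zero $E_{k-1}$-expectation, with values in the projective tensor product $X\hat\otimes_\pi X^*$. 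The target bound is
\begin{equation*}
  \|\Phi(f,g)\|_{H^1(\R^d;\,X\hat\otimes_\pi X^*)} \lesssim \|f\|_{L^p(\R^d;X)}\|g\|_{L^{p'}(\R^d;X^*)},
\end{equation*}
to be proved via an atomic (or stopping-time / sparse) decomposition combined with Doob's maximal inequality and a Stein-type conditional inequality in the UMD setting.

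\textbf{The main obstacle} lies precisely in this last $H^1$-estimate. Neither summand of $\Phi$ admits, in a general UMD space, an independent $H^1$-bound: the scalar argument relies on the martingale square-function identity $\|h\|^2 = \sum \|\Delta_k h\|^2$, which is Hilbert-space-specific and fails in general Banach spaces. This is exactly the obstruction behind Mei's lower bound ruling out $\|\Pi_b\|\lesssim \|b\|_{\BMO}$ in infinite dimensions, and which corresponds, on the level of $\Phi$, to the failure of either of its two terms to satisfy the desired $H^1$-bound separately. It is only the \emph{symmetric sum} $\Phi(f,g)$ that admits the required Banach-space cancellation, and capturing this cancellation rigorously in the vector-valued, randomised setting -- without recourse to Hilbert-space tools such as Plancherel -- is the technical heart of the argument and is where the symmetry imposed by the definition of $\Lambda_b$ is genuinely exploited.
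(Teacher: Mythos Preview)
Your framework is essentially the paper's: pair $\Lambda_b f$ with $g$, rewrite the pairing as $\langle b,\Phi(f,g)\rangle$ with $\Phi$ valued in the projective tensor product $X\hat\otimes X^*$, apply the $H^1$--$\BMO$ duality (valid for arbitrary Banach spaces), and reduce everything to the $H^1$-estimate for $\Phi(f,g)$. That part is fine.

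The genuine gap is precisely the step you flag as the ``main obstacle''. You propose atomic/stopping-time/sparse machinery and a Stein-type inequality, but none of this is needed, and it is not clear how it would capture the symmetric cancellation. What you are missing is a simple \emph{algebraic} identity: expanding
\[
  \E_k f\otimes\E_k g-\E_{k-1}f\otimes\E_{k-1}g
  =\E_{k-1}f\otimes\D_k g+\D_k f\otimes\E_{k-1}g+\D_k f\otimes\D_k g
\]
and telescoping gives
\[
  \sum_{k\le K}\bigl(\E_{k-1}f\otimes\D_k g+\D_k f\otimes\E_{k-1}g\bigr)
  =\E_K f\otimes\E_K g-\sum_{k\le K}\D_k f\otimes\D_k g.
\]
Since $H^1_d$ is defined through the maximal function of partial sums, this immediately splits the $H^1$-norm of $\Phi(f,g)$ into two manageable pieces. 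The first, $\sup_K\|\E_K f\otimes\E_K g\|$, is dominated by $M_d f\cdot M_d g$, handled by Doob's inequality (this is where your $pp'$ comes from, absorbed into $\beta_{p,X}^2$). The second, $\sup_K\|\sum_{k\le K}\D_k f\otimes\D_k g\|$, is handled by introducing \emph{one} Rademacher sequence $(\eps_k)$ and writing $\sum_k\D_k f\otimes\D_k g=\E_\eps(\sum_k\eps_k\D_k f)\otimes(\sum_j\eps_j\D_j g)$; then Kahane's contraction principle removes the truncation, and UMD on each factor gives $\beta_{p,X}\beta_{p',X^*}=\beta_{p,X}^2$.

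Two smaller points. First, your placement of the UMD step \emph{before} the $H^1$--$\BMO$ duality is backwards; the duality comes first, and UMD enters only inside the $H^1$-estimate for the diagonal piece $\sum_k\D_k f\otimes\D_k g$. Second, you speak of ``independent Rademacher sign sequences'' for $f$ and $g$; in fact the \emph{same} sequence must be used on both factors so that the cross terms vanish in expectation. Independent sequences would not reproduce the diagonal sum.
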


We note that the case when $p=2$ and $X=H$ is a Hilbert space is already due to Blasco and Pott \cite[Theorem 2.6]{BlaPo:08}. Hong, Liu and Mei \cite[Proposition 2.2]{HLM} provide a version with $p\in(1,\infty)$, where $X$ is a non-commutative $L^p$ space (with the same~$p$), and either the domain or the target needs to be replaced by an appropriate non-commutative Hardy space in place of  $L^p(\R^d;X)$ when $p\neq 2$. This \cite[Proposition 2.2]{HLM} plays a similar role in their $T(1)$ theorem,  as Theorem \ref{thm:parapIntro} in our Theorem \ref{thm:T1intro}, and motivates our approach to both results.

Theorem \ref{thm:T1intro} is actually a relatively quick corollary of Theorem \ref{thm:parapIntro} and the intermediate results in essentially any existing proof of the operator-valued $T(1)$ theorem based on the dyadic approach. Namely, these proofs typically decompose the operator into a sum of two paraproducts $\Pi_{b_1}+\Pi_{b_2^*}^*$, estimated with the help of some BMO type assumptions, and the cancellative part, which is handled by using the Calder\'on--Zygmund kernel estimates and the weak boundedness property. For the cancellative part, we can simply borrow the estimates that were already carried out in one of the previous works. For the paraproduct part, under our symmetry assumption \eqref{eq:T1symm}, we have $b_1=b_2=b$, and hence this part reduces to
\begin{equation*}
  \Pi_{b_1}+\Pi_{b_2^*}^*=\Pi_b+\Pi_{b^*}^*=\Lambda_b,
\end{equation*}
which is precisely the operator estimated in Theorem \ref{thm:parapIntro}. Such a decomposition into the paraproduct part and the cancellative part is at least implicitly behind essentially all known proofs of the $T(1)$ theorem, but it is particularly clean in the recent {\em dyadic representation theorems} that originate  from the resolution of the $A_2$ conjecture on sharp weighted norm inequalities \cite{Hytonen:A2}. For our purposes, we use the {\em operator-valued} dyadic representation theorem from \cite{HH:2016}. (Hong, Liu and Mei \cite{HLM} adapt the approach of \cite{Hytonen:revisit} instead; this would also have been a relevant alternative here.)s

The rest of this paper is structured as follows. In Section \ref{sec:prelim}, we collect some necessary preliminaries on the vector-valued dyadic Hardy space and BMO on the one hand, and on projective tensor products and their duality on the other hand. The latter provide a key substitute in our considerations for some of the non-commutative tools used by \cite{HLM}. In Section \ref{sec:parap}, we provide the definitions related to paraproducts, and give the proof of Theorem \ref{thm:parapIntro}. Up to this point, all considerations are purely dyadic, and we only turn to continuous singular integrals and related objects in the remaining two sections. In Section \ref{sec:T1setup}, we provide all necessary definitions to give a precise formulation of our main Theorem \ref{thm:T1intro}. This theorem is then proved in the final Section \ref{sec:T1proof} via the operator-valued dyadic representation theorem of \cite{HH:2016}, which we recall there.

\section{Preliminaries}\label{sec:prelim}

A {\em system of dyadic cubes} in $\R^d$ is a family
\begin{equation*}
  \mathscr D:=\bigcup_{k\in\Z}\mathscr D_k
\end{equation*}
of (axes-parallel, left-closed, right-open) cubes $Q$ such that, for each $k\in\Z$,
\begin{itemize}
  \item $\mathscr D_k$ is a partition of $\R^d$ consisting of cubes $Q$ of side-length $\ell(Q)=2^{-k}$;
  \item $\mathscr D_{k+1}$ is a refinement of $\mathscr D_k$.
\end{itemize}
We consider one such dyadic system fixed for the moment. However, in our approach to the $T(1)$ theorem below, it will be important that all estimates hold uniformly with respect to the choice of the dyadic systems, as the later considerations will involve a {\em random} choice.

A dyadic system induces the averaging (or conditional expectation) operators
\begin{equation*}
  \E_k f:=\sum_{Q\in\mathscr D_k} 1_Q\ave{f}_Q,\qquad\ave{f}_Q:=\fint_Q f:=\frac{1}{\abs{Q}}\int_Q f(t)\ud t
\end{equation*}
and the martingale difference operators
\begin{equation*}
  \D_k f:=\E_{k+1}f-\E_k f,
\end{equation*}
both well defined for $f\in L^1_{\loc}(\R^d;E)$, where $E$ is any Banach space.

For (say) $f\in L^p(\R^d;E)$ with $1<p<\infty$, we have $\E_k f\to f$ as $k\to\infty$, and $\E_k f\to 0$ as $k\to-\infty$, both in the norm of $L^p(\R^d;E)$ and pointwise almost everywhere (see e.g. \cite[Theorems 3.3.2 and 3.3.5]{HNVW1}) and hence
\begin{equation}\label{eq:sumDk}
  f=\sum_{k\in\Z}\D_k f;
\end{equation}
in particular, finite truncations of sums on the right are dense in $L^p(\R^d;E)$. If $E$ is a UMD space, the convergence of \eqref{eq:sumDk} is unconditional. In particular, the modified sums $\sum_{k\in\Z}\epsilon_k\D_k f$ with $\epsilon_k\in\{-1,+1\}$ also converge, and
\begin{equation*}
  \BNorm{\sum_{k\in\Z}\epsilon_k\D_k f}{L^p(\R^d;E)}
  \leq\beta_{p,E}\BNorm{\sum_{k\in\Z}\D_k f}{L^p(\R^d;E)}=\beta_{p,E}\Norm{f}{L^p(\R^d;E)},
\end{equation*}
where $\beta_{p,E}$ is the UMD constant of $E$. See \cite[Chapter 4]{HNVW1} for these results and more on UMD spaces.

\subsection{Dyadic $H^1$ and $\BMO$}
The {\em dyadic Hardy space} $H^1_d(\R^d;E)$ is defined with the help of the {\em cancellative} dyadic maximal function
\begin{equation*}
  M_dh:=\sup_{k\in\Z}\Norm{\E_k h}{E}=\sup_{Q\in\mathscr D} 1_Q\Norm{\ave{h}_Q}{E}.
\end{equation*}
It is essential that the norm is taken outside and not inside the average. Then
\begin{equation*}
  H^1_d(\R^d;E)
  :=\Big\{h\in L^1(\R^d;E):\Norm{h}{H^1_d}:=\Norm{M_dh}{1}<\infty\Big\}.
\end{equation*}
Note that the choice of $L^1(\R^d;E)$ as the ambient space does not impose any essential restriction. Even if we only demanded that $h\in L^1_{\loc}(\R^d;E)$ (an essentially minimal condition to be able to make sense of $M_dh$) it follows from Lebesgue's differentiation theorem that $\Norm{h(\cdot)}{E}\leq M_dh$ a.e., and hence $h\in L^1(\R^d;E)$ if $M_dh\in L^1(\R^d)$.

The dyadic BMO space $\BMO_d(\R^d;F)$ (with valued in another Banach space $F$) is defined as
\begin{equation*}
  \BMO_d(\R^d;F):=\Big\{b\in L^1_{\loc}(\R^d;F):\Norm{b}{\BMO_d}:=\sup_{Q\in\mathscr D}\fint_Q\Norm{b-\ave{b}_Q}{F}<\infty\Big\}.
\end{equation*}
Both the dyadic $H^1$ and BMO are special cases of {\em martingale} $H^1$ and BMO (with respect to a regular filtration). When $F=E^*$, there is a fundamental duality between these spaces. It is essential that the following key estimate of this duality is valid for an arbitrary Banach space $E$ (see \cite[Theorem 12]{Bourgain:86}):
\begin{equation}\label{eq:H1BMOduality}
\begin{split}
  \abs{\pair{b}{h}}
  :\!&=\Babs{\lim_{N\to\infty}\int_{\R^d} \min\big\{1,\frac{N}{\Norm{b(x)}{E^*}}\big\}\pair{b(x)}{h(x)}\ud x} \\
  &\lesssim\Norm{b}{\BMO_d(\R^d;E^*)}\Norm{h}{H^1_d(\R^d;E)},
\end{split}
\end{equation}
where the implied constant depends only on the dimension $d$ (more generally, on the underlying filtration, which in our case is the dyadic filtration of $\R^d$). Note that $x\mapsto\pair{b(x)}{h(x)}$ need not be integrable under these assumptions, so that the pairing needs to be defined via such a limiting process in general. Of course, {\em if} $x\mapsto\pair{b(x)}{h(x)}$ is integrable, then dominated convergence shows that the limit is simply the integral of this function.

This identifies $\BMO_d(\R^d;E^*)$ with a subspace of $(H_d^1(\R^d;E))^*$, and it is not difficult check that this identification is isomorphic, although we only need the one-sided inequality in \eqref{eq:H1BMOduality}. (It is also known that $\BMO_d(\R^d;E^*)$ exhausts the entire dual of $H^1_d(\R^d;E)$, if and only if $E^*$ has the {\em Radon--Nikod\'ym property}, cf.~\cite{Blasco:88}, we have no need for such considerations in the present context.)

\subsection{Projective tensor product and duality}
We will also need some basic facts about the {\em projective tensor product} $E\hat\otimes F$ of Banach spaces $E$ and $F$. A reference for this material is \cite[Sec. 0.b]{Pisier:fact}. The algebraic tensor product $E\otimes F$ consists of finite sums of the form
\begin{equation*}
  v=\sum_{k=1}^K e_k\otimes f_k,\qquad e_k\in E,\ f_k\in F.
\end{equation*}
On this space, we define the norm
\begin{equation*}
  \Norm{v}{\wedge}:=\inf\sum_{k=1}^K \Norm{e_k}{E}\Norm{f_k}{F},
\end{equation*}
where the infimum runs over all expansions of $v$ of this form. Then $E\hat\otimes F$ is the completion of $E\otimes F$ with respect to this norm.

Let $\mathscr B(E\times F)$ stand for the space of bounded {\em bilinear forms} on $E\times F$. This can be identified with either of the two spaces of bounded {\em linear operators} $\mathscr L(E,F^*)$ or $\mathscr L(F,E^*)$. For any $\phi\in\mathscr B(E\times F)$ and $v=\sum_{k=1}^K e_k\otimes f_k\in E\otimes F$, the pairing
\begin{equation*}
  \pair{\phi}{v}:=\sum_{k=1}^K \phi(e_k,f_k)
\end{equation*}
is well-defined, i.e., independent of the particular representation of $v$. It is then clear that
\begin{equation*}
  \abs{\pair{\phi}{v}}\leq\Norm{\phi}{\mathscr B(E,F)}\Norm{v}{\wedge},
\end{equation*}
and hence, by continuity, $\phi$ induces an element of $(E\hat\otimes F)^*$. Conversely, given $\lambda\in(E\hat\otimes F)^*$ the formula $\phi(e,f):=\lambda(e\otimes f)$ defines a bilinear form $\phi\in\mathscr B(E\times F)$, which induces $\lambda$ in the above sense. This gives rise to the isometric identification
\begin{equation*}
   (E\hat\otimes F)^*\simeq\mathscr B(E,F)\simeq\mathscr L(E,F^*). 
\end{equation*}

In combination with the $H^1$-BMO duality, this shows that
\begin{equation*}
  \abs{\pair{b}{h}}\lesssim\Norm{b}{\BMO(\R^d;\mathscr L(E,F^*))}\Norm{h}{H^1(\R^d; E\hat\otimes F)}
\end{equation*}
for all functions $b$ and $h$ in the indicated spaces. Our main interest lies in the case when $F=Y^*$ is a dual space. Via the usual identification $Y\subseteq Y^{**}$, we have $\mathscr L(E,Y)\subseteq \mathscr L(E,Y^{**})$, and hence in particular
\begin{equation*}
  \abs{\pair{b}{h}}\lesssim\Norm{b}{\BMO(\R^d;\mathscr L(E,Y))}\Norm{h}{H^1(\R^d; E\hat\otimes Y^*)}
\end{equation*}
for all functions $b$ and $h$ in the indicated spaces. Note that even if $E$ and $Y$ are very nice spaces (as they will be in our main application), the spaces $E\hat\otimes Y^*$ and $\bddlin(E,Y)$ are not, and hence it is quite essential for our purposes that we only use the part of the $H^1$-$\BMO$ duality that is valid in general Banach spaces.

\section{Paraproducts}\label{sec:parap}

The paraproduct of two functions $b$ and $f$ is the formal series
\begin{equation*}
  \Pi_b f=\sum_{k\in\Z}\D_k b\,\E_{k-1}f
  =\sum_{k\in\Z}\D_k (b\,\E_{k-1}f).
\end{equation*}
Given two Banach space $X$ and $Y$, the individual terms of this series are well-defined for $b\in L^1_{\loc}(\R^d;\bddlin(X,Y))$ and $f\in L^p(\R^d;X)$, producing $\D_k b\,\E_{k-1}f\in L^\infty_{\loc}(\R^d;Y)$. The series can then be paired agains any $g\in L^{p'}_c(\R^d;Y^*)$ (here and below, the subscript $c$ refers to compact support) with a finitely-nonzero martingale difference expansion $g=\sum_{k\in\Z}\D_k g$ by
\begin{equation*}
  \pair{\Pi_b f}{g}=\sum_{k\in\Z}\pair{ b\,\E_{k-1}f}{\D_k g}
  =\sum_{k\in\Z}\pair{b}{\E_{k-1}f\otimes\D_k g}
  =\Bpair{b}{\sum_{k\in\Z}\E_{k-1}f\otimes\D_k g}.
\end{equation*}
All pairings involving the sum over $k\in\Z$ are the integral pairings $\pair{F}{G}=\int\pair{F(x)}{G(x)}\ud x$ where the pointwise pairing $\pair{F(x)}{G(x)}$ is between $Y$ and $Y^*$ in the first sum, and between $\bddlin(X,Y)$ and $X\otimes Y^*\subset X\hat\otimes Y^*$ in the second and the final ones.

Similarly, for $b$ as before, $f\in L^p_c(\R^d;X)$ and $g\in L^{p'}(\R^d;Y^*)$, we have
\begin{equation*}
  \pair{f}{\Pi_{b^*}g}
  =\sum_{k\in\Z}\pair{\D_k f}{b^*\,\E_{k-1}g}
  =\sum_{k\in\Z}\pair{b\,\D_k f}{\E_{k-1}g}
  =\Bpair{b}{\sum_{k\in\Z}\D_{k}f\otimes\E_{k-1} g},
\end{equation*}
where the ultimate right is again an integral pairing of the same type as before.

Hence, if $b\in L^1_{\loc}(\R^d;\bddlin(X,Y))$ and both $f\in L^p_c(\R^d;X)$ and $g\in L^{p'}_c(\R^d;Y^*)$ have finite martingale difference expansions, then
\begin{equation}\label{eq:sumOfPbs}
  \pair{\Lambda_b f}{g}
  :=\pair{(\Pi_b+\Pi_{b^*}^*)f}{g}
  =\Bpair{b}{\sum_{k\in\Z}(\E_{k-1}f\otimes\D_k g+\D_{k}f\otimes\E_{k-1} g)},
\end{equation}
where the sum is finite, and we have an integral pairing between a function taking values in $\bddlin(X,Y)$ and another one with values in $X\otimes Y^*\subset X\hat\otimes Y^*$. We can now elaborate and prove Theorem \ref{thm:parapIntro} as follows:

\begin{theorem}\label{thm:parasum}
Let $X$ and $Y$ be UMD spaces and $b\in\BMO_d(\R^d;\bddlin(X,Y))$. Then the symmetrised paraproduct $\Lambda_b$ defined by \eqref{eq:sumOfPbs} extends to a bounded linear operator from $L^p(\R^d;X)$ to $L^p(\R^d;Y)$ with the norm estimate
\begin{equation*}
\begin{split}
  \Norm{\Lambda_b}{\bddlin(L^p(\R^d;X),L^p(\R^d;Y))}
  &\leq c_d(pp'+\beta_{p,X}\beta_{p,Y})\Norm{b}{\BMO(\R^d;\bddlin(X,Y))} \\
  &\leq c_d'\cdot\beta_{p,X}\beta_{p,Y}\Norm{b}{\BMO(\R^d;\bddlin(X,Y))}
\end{split}
\end{equation*}
where the first $c_d$ is the constant in the $H^1$-$\BMO$ duality \eqref{eq:H1BMOduality}.
\end{theorem}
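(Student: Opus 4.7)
The plan is to use the $H^1$-$\BMO$ duality \eqref{eq:H1BMOduality} together with the embedding $\bddlin(X,Y)\subseteq\bddlin(X,Y^{**})\simeq(X\hat\otimes Y^*)^*$. For test functions $f\in L^p_c(\R^d;X)$ and $g\in L^{p'}_c(\R^d;Y^*)$ with finite martingale expansions, identity \eqref{eq:sumOfPbs} combined with \eqref{eq:H1BMOduality} gives
\begin{equation*}
|\pair{\Lambda_b f}{g}|\leq c_d\Norm{b}{\BMO_d(\R^d;\bddlin(X,Y))}\cdot\Norm{G}{H^1_d(\R^d;X\hat\otimes Y^*)},
\end{equation*}
where $G:=\sum_k[\E_{k-1}f\otimes\D_k g+\D_k f\otimes\E_{k-1}g]$. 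By density, the theorem reduces to the bilinear $H^1_d$-bound $\Norm{G}{H^1_d(\R^d;X\hat\otimes Y^*)}\lesssim(pp'+\beta_{p,X}\beta_{p,Y})\Norm{f}{L^p(X)}\Norm{g}{L^{p'}(Y^*)}$.

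The next step is to observe that the dyadic averages $\phi_j(x):=\ave{G}_{Q_j(x)}$ form an $(X\hat\otimes Y^*)$-valued martingale: its increment $\phi_{j+1}-\phi_j=\E_{j-1}f\otimes\D_j g+\D_j f\otimes\E_{j-1}g$ has mean zero on every $Q\in\mathscr{D}_j$, since $\E_{j-1}f$ is constant on such $Q$ while $\fint_Q\D_j g=0$, and symmetrically. Hence $M_d G=\sup_j\Norm{\phi_j}{X\hat\otimes Y^*}$. Using the discrete product rule
\begin{equation*}
\E_k f\otimes\E_k g-\E_{k-1}f\otimes\E_{k-1}g=\E_{k-1}f\otimes\D_{k-1}g+\D_{k-1}f\otimes\E_{k-1}g+\D_{k-1}f\otimes\D_{k-1}g,
\end{equation*}
telescoping, and the substitution $\E_{k-1}f=\E_k f-\D_{k-1}f$, one arrives at the explicit decomposition
\begin{equation*}
\phi_j=\E_j f\otimes\E_j g-T_j,\qquad T_j:=\sum_{k\leq j-1}\bigl[\D_k f\otimes\D_k g+\D_{k-1}f\otimes\D_k g+\D_k f\otimes\D_{k-1}g\bigr],
\end{equation*}
and hence $M_d G(x)\leq f^*(x)g^*(x)+\sup_j\Norm{T_j(x)}{X\hat\otimes Y^*}$, with $f^*,g^*$ the $X$- and $Y^*$-valued dyadic maximal functions.

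The rank-one part uses the projective-norm identity $\Norm{u\otimes v}{X\hat\otimes Y^*}=\Norm{u}{X}\Norm{v}{Y^*}$, giving $\Norm{\E_j f(x)\otimes\E_j g(x)}{X\hat\otimes Y^*}\leq f^*(x)g^*(x)$. H\"older and Doob then yield $\int f^*g^*\leq pp'\Norm{f}{L^p(X)}\Norm{g}{L^{p'}(Y^*)}$, which produces the $pp'$ contribution. The diagonal part is handled by the Rademacher identity
\begin{equation*}
\sum_k u_k\otimes v_k=\E_\eps\Bigl[\Bigl(\sum_k\eps_k u_k\Bigr)\otimes\Bigl(\sum_l\eps_l v_l\Bigr)\Bigr],
\end{equation*}
applied separately to each of the three subseries in $T_j$; the shifted sequences $(\eps_{k+1})_k$ needed for the cross-diagonal terms have the same joint distribution as $(\eps_k)_k$. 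Setting $f^\eps:=\sum_k\eps_k\D_k f$ and $g^\eps:=\sum_k\eps_k\D_k g$, the identity specialises to $\sum_{k\leq j-1}\D_k f\otimes\D_k g=\E_\eps[\E_j f^\eps\otimes\E_j g^\eps]$, so taking norms, using $\Norm{u\otimes v}{X\hat\otimes Y^*}=\Norm{u}{X}\Norm{v}{Y^*}$, and commuting $\sup_j$ past $\E_\eps$ via Jensen yields
\begin{equation*}
\sup_j\Norm{T_j(x)}{X\hat\otimes Y^*}\leq C\,\E_\eps\bigl[(f^\eps)^*(x)\,(g^\eps)^*(x)\bigr].
\end{equation*}
Integrating, H\"older and Doob applied for each fixed $\eps$, and the UMD bounds $\Norm{f^\eps}{L^p(X)}\leq\beta_{p,X}\Norm{f}{L^p(X)}$ and $\Norm{g^\eps}{L^{p'}(Y^*)}\leq\beta_{p,Y}\Norm{g}{L^{p'}(Y^*)}$ (using $\beta_{p',Y^*}=\beta_{p,Y}$) give a bound $\lesssim\beta_{p,X}\beta_{p,Y}\Norm{f}{L^p(X)}\Norm{g}{L^{p'}(Y^*)}$, which is absorbed into the second, weaker form of the theorem (with any residual $pp'$ factor controlled by $\beta_{p,X}\beta_{p,Y}$ for UMD spaces).

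The main technical obstacle is the diagonal estimate: without cancellation one faces the sum $\sum_k\Norm{\D_k f(x)}{X}\Norm{\D_k g(x)}{Y^*}$, which in a general UMD space is not controlled by any natural $L^1$-quantity built from $f$ and $g$ (unlike the Hilbertian case where it would be bounded by the product of square functions). The Rademacher randomization is the essential tool, since it restores a product structure of two independent randomized martingale sums, on each of which UMD can be invoked separately, while the projective tensor norm identity then couples the two factors together.
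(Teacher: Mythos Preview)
Your overall strategy---$H^1$--$\BMO$ duality in the projective tensor product, the telescoping identity splitting $\E_j G$ into a rank-one term $\E_j f\otimes\E_j g$ and a diagonal remainder, Doob for the rank-one part, and Rademacher randomisation for the diagonal---is exactly the paper's approach. (Your extra cross-diagonal terms $\D_{k-1}f\otimes\D_k g$ and $\D_k f\otimes\D_{k-1}g$ arise only because you are taking the paper's stated convention $\D_k=\E_{k+1}-\E_k$ literally, whereas the paper's own computation tacitly uses $\D_k=\E_k-\E_{k-1}$; either way the argument proceeds identically.)

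There is, however, a quantitative gap in your diagonal estimate. After writing $\sum_{k\leq j-1}\D_k f\otimes\D_k g=\E_\eps[\E_j f^\eps\otimes\E_j g^\eps]$, you bound $\|\E_j f^\eps\|_X\leq (f^\eps)^*$ and then apply Doob, which costs an extra factor $pp'$. Your diagonal contribution is therefore $\lesssim pp'\,\beta_{p,X}\beta_{p,Y}$, and the parenthetical claim that a ``residual $pp'$ factor'' is ``controlled by $\beta_{p,X}\beta_{p,Y}$'' yields only $(\beta_{p,X}\beta_{p,Y})^2$, not $\beta_{p,X}\beta_{p,Y}$. Neither displayed inequality in the theorem follows.

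The repair is simple and is precisely what the paper does: do not pass to maximal functions in the diagonal term. Instead, after the Rademacher identity apply H\"older in $\eps$ with exponents $p,p'$ to obtain
\begin{equation*}
  \BNorm{\sum_{k\leq K}\D_k f\otimes\D_k g}{X\hat\otimes Y^*}
  \leq\BNorm{\sum_{k\leq K}\eps_k\D_k f}{L^p(\Omega;X)}\BNorm{\sum_{k\leq K}\eps_k\D_k g}{L^{p'}(\Omega;Y^*)},
\end{equation*}
and then invoke Kahane's contraction principle to drop the truncation $k\leq K$ in each factor. The resulting bound is independent of $K$, so the supremum over $K$ is free, and after H\"older in $x$ and the UMD inequality one gets exactly $\beta_{p,X}\beta_{p,Y}\Norm{f}{L^p(\R^d;X)}\Norm{g}{L^{p'}(\R^d;Y^*)}$ with no Doob factor. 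The same trick handles your cross-diagonal sums.
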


\begin{proof}
The second inequality follows from the fact that, for any Banach space $E$,
\begin{equation*}
  \beta_{p,E}\geq\beta_{p,\R}=\max(p,p')-1\geq\frac12\max(p,p');
\end{equation*}
see \cite[Proposition 4.2.17(3) and Theorem 4.5.7]{HNVW1} for the first and second steps in the above computation. Thus we concentrate on the first inequality in the statement of the theorem.

By standard density and duality results concerning the $L^p(\R^d;X)$ spaces, it is enough to prove that
\begin{equation*}
   \abs{\pair{\Lambda_b f}{g}} \leq c_d(pp'+\beta_{p,X}\beta_{p,Y})\Norm{b}{\BMO(\R^d;\bddlin(X,Y))},
\end{equation*}
for all $f\in L^p_c(\R^d;X)$ and $g\in L^{p'}_c(\R^d;Y^*)$ with finite martingale difference expansions and norm one in $L^p(\R^d;X)$ and $L^{p'}(\R^d;Y^*)$, respectively.

By \eqref{eq:sumOfPbs} and the $H^1$-$\BMO$ duality \eqref{eq:H1BMOduality} for $E=X\otimes Y^*$ and $E^*\supseteq \bddlin(X,Y)$, we have
\begin{equation*}
\begin{split}
  &\abs{\pair{\Lambda_b f}{g}} 
  \lesssim \Norm{b}{\BMO_d(\R^d;\bddlin(X,Y))}\BNorm{\sum_{k\in\Z}(\E_{k-1}f\otimes\D_k g+\D_{k}f\otimes\E_{k-1} g)}{H^1_d(\R^d;X\hat\otimes Y^*)} \\
  &= \Norm{b}{\BMO_d(\R^d;\bddlin(X,Y))}\BNorm{\sup_{K\in\Z}\Norm{\sum_{k\leq K}(\E_{k-1}f\otimes\D_k g+\D_{k}f\otimes\E_{k-1} g)}{X\hat\otimes Y^*}}{L^1(\R^d)},
\end{split}
\end{equation*}
and the task is reduced to estimating the $H^1_d$ norm on the right.

Since
\begin{equation*}
\begin{split}
  \E_k f &\otimes \E_k g-\E_{k-1}f\otimes\E_{k-1}g \\
  &=(\E_{k-1}+\D_k) f\otimes (\E_{k-1}+\D_k) g-\E_{k-1}f\otimes\E_{k-1}g \\
  &=\E_{k-1}f\otimes\D_k g+\D_{k}f\otimes\E_{k-1} g+\D_k f\otimes \D_k g,
\end{split}
\end{equation*}
we find by telescoping that
\begin{equation*}
  \sum_{k\leq K}(\E_{k-1}f\otimes\D_k g+\D_{k}f\otimes\E_{k-1} g)
  =\E_K f \otimes \E_K g-\sum_{k\leq K}\D_k f\otimes \D_k g.
\end{equation*}
Let us keep in mind that, by the assumptions on $f$ and $g$, all these sums are finite, and hence these functions take their values in the algebraic tensor product $X\otimes Y^*$.

By the triangle inequality, we then have
\begin{equation*}
\begin{split}
  &\BNorm{\sum_{k\in\Z}(\E_{k-1}f\otimes\D_k g+\D_{k}f\otimes\E_{k-1} g)}{H^1(\R^d;X\hat\otimes Y^*)} \\
  &\leq \BNorm{\sup_{K\in\Z}\Norm{\E_K f \otimes \E_K g}{X\hat\otimes Y^*}}{L^1(\R^d)}
    +\BNorm{\sup_{K\in\Z}\Norm{\sum_{k\leq K}\D_k f\otimes \D_k g}{X\hat\otimes Y^*}}{L^1(\R^d)} \\
    &=:I+II.
\end{split}
\end{equation*}
It is immediate that
\begin{equation*}
\begin{split}
  I &\leq \BNorm{\sup_{K\in\Z}\Norm{\E_K f \otimes \E_K g}{X\hat\otimes Y^*}}{L^1(\R^d)} 
  \leq \BNorm{\sup_{K\in\Z}\Norm{\E_K f}{X}\sup_{K\in\Z}\Norm{\E_K g}{Y^*}}{L^1(\R^d)} \\
  &=\Norm{M_df\, M_dg}{L^1(\R^d)}
  \leq\Norm{M_df}{L^p(\R^d)}\Norm{M_dg}{L^{p'}(\R^d)}
  \leq p'\Norm{f}{L^p(\R^d;X)}\cdot p\Norm{g}{L^{p'}(\R^d;Y^*)}
\end{split}
\end{equation*}
by Doob's maximal inequality (see \cite[Theorem 3.2.2]{HNVW1}) in the last step.

On the other hand, introducing independent unbiased random signs $\eps_k$ on some probability space $(\Omega,\mathscr F,\P)$ with expectation $\E_\eps=\int_\Omega(\cdot)\ud\P$, we have
\begin{equation*}
  \E_\eps(\eps_k\eps_j)=\delta_{k,j}
\end{equation*}
and hence
\begin{equation*}
\begin{split}
  \BNorm{\sum_{k\leq K}\D_k f\otimes \D_k g}{X\hat\otimes Y^*}
  &=\BNorm{\E_\eps\Big(\sum_{k\leq K}\eps_k\D_k f\Big)\otimes\Big(\sum_{j\leq K}\eps_j \D_j g\Big)}{X\hat\otimes Y^*} \\
  &\leq\E_\eps\BNorm{\sum_{k\leq K}\eps_k\D_k f}{X}\BNorm{\sum_{j\leq K}\eps_j \D_j g}{Y^*} \\
  &\leq\BNorm{\sum_{k\leq K}\eps_k\D_k f}{L^p(\Omega;X)}\BNorm{\sum_{j\leq K}\eps_j \D_j g}{L^{p'}(\Omega;Y^*)} \\
  &\leq\BNorm{\sum_{k\in \Z}\eps_k\D_k f}{L^p(\Omega;X)}\BNorm{\sum_{j\in \Z}\eps_j \D_j g}{L^{p'}(\Omega;Y^*)}
\end{split}
\end{equation*}
by Kahane's contraction principle (see \cite[Proposition 3.2.10]{HNVW1}) for such random sums in the last step. Thus
\begin{equation*}
\begin{split}
  II &\leq \BNorm{ \Norm{\sum_{k\in \Z}\eps_k\D_k f}{L^p(\Omega;X)}\Norm{\sum_{j\in \Z}\eps_j \D_j g}{L^{p'}(\Omega;Y^*)} }{L^1(\R^d)} \\
  &\leq \BNorm{\sum_{k\in \Z}\eps_k\D_k f}{L^p(\R^d\times \Omega;X)}\BNorm{\sum_{j\in \Z}\eps_j \D_j g}{L^{p'}(\R^d\times\Omega;Y^*)} \\
  &\leq \beta_{p,X}\Norm{f}{L^p(\R^d;X)}\cdot\beta_{p',Y^*}\Norm{g}{L^{p'}(\R^d\times\Omega;Y^*)}
\end{split}
\end{equation*}
by the UMD property of $X$ and $Y^*$ in the last step. We note that $\beta_{p',Y^*}=\beta_{p,Y}$ (see \cite[Proposition 4.2.17(2)]{HNVW1}).

Putting the pieces together, we have completed the proof of Theorem \ref{thm:parasum}.
\end{proof}

\begin{remark}
As discussed in the Introduction, the previous result extends \cite[Proposition 2.2]{HLM}, which contains the case that $p=2$ and $X=Y$ is a Hilbert space, and a weaker statement in the case that $p\in(1,2)\cup(2,\infty)$ and $X=Y$ is a noncommutative $L^p$ space.

Our method of proof is also analogous to, and inspired by, that of \cite[Proposition 2.2]{HLM}. The main new ingredients consist of using the projective tensor product as a replacement of the product in the noncommutative $L^p$ spaces, and the random signs as a replacement of some other noncommutative constructions.
\end{remark}

\section{Set-up for the $T(1)$ theorem}\label{sec:T1setup}

We now turn to a discussion and precise definition of the notions appearing in the statement of our main Theorem \ref{thm:T1intro}.
Let $\mathcal Q(\R^d;X)$ be the space of finite linear combinations of functions of the form $1_Q x$, where $Q\subset\R^d$ is an axes-parallel left-closed, right-open cube and $x\in X$.

Let $t$ be a bilinear form on $\mathcal Q(\R^d;X)\times\mathcal Q(\R^d;X^*)$ associated with a kernel $K\in C(\dot\R^{2d};\bddlin(X))$, where $\dot\R^{2d}=\{(x,y)\in\R^d\times\R^d:x\neq y\}$, in the sense that
\begin{equation}\label{eq:Bfg}
  t(f, g)=\iint \pair{K(x,y)f(y)}{g(x)}\ud y\ud x
\end{equation}
whenever $f\in\mathcal Q(\R^d;X)$ and $g\in\mathcal Q(\R^d;X^*)$ have disjoint supports. (We implicitly assume that the integral on the right makes sense for all such $f,g$; this will follow from the assumptions that will be imposed on $K$ next.) If we can show that
\begin{equation*}
  \abs{t(f,g)}\lesssim\Norm{f}{L^p(\R^d;X)}\Norm{g}{L^{p'}(\R^d;X^*)}
\end{equation*}
for all $(f,g)\in \mathcal Q(\R^d;X)\times\mathcal Q(\R^d;X^*)$, then one obtains the existence of a unique $T\in\bddlin(L^p(\R^d;X),(L^{p'}(\R^d;X^*))^*)$ such that $\pair{Tf}{g}=t(f,g)$ for all $(f,g)\in \mathcal Q(\R^d;X)\times\mathcal Q(\R^d;X^*)$. When $X=X^{**}$ is reflexive (in particular, when $X$ is a UMD space; see  \cite[Theorem 4.3.3]{HNVW1}), it has the so-called Radon--Nikod\'ym property, and $(L^{p'}(\R^d;X^*))^*$ can be identified with $L^p(\R^d;X^{**})=L^p(\R^d;X)$ (see \cite[Theorems 1.3.10 and 1.3.21]{HNVW1}). 

We recall that a family of operators $\mathscr T\subset\bddlin(X)$ is called {\em $R$-bounded} if
\begin{equation}\label{eq:defRbdd}
  \BNorm{\sum_{k=1}^K\eps_k T_k x_k}{L^p(\Omega;X)}\leq C\BNorm{\sum_{k=1}^K\eps_k x_k}{L^p(\Omega;Y)}
\end{equation}
for some (equivalently, by the Khintchine--Kahane inequality \cite[Theorem 3.2.23]{HNVW1}, for all) $p\in[1,\infty)$, for all $K\in\Z_+$, all $x_k\in X$ and all $T_k\in\mathscr T$, where $\eps_k$ are (as before) unbiased random signs on the probability space $(\Omega,\mathcal F,\P)$, and $C<\infty$ depends at most on $p$. The least constant $C$ admissible in \eqref{eq:defRbdd} is denoted by $\mathscr R_p(\mathscr T)$, and $\mathscr R(\mathscr T):=\mathscr R_2(\mathscr T)$ is called the $R$-bound of $\mathscr T$.
See \cite[Chapter 8]{HNVW2} for an extensive treatment of this notion.

We say that $K$ satisfies the {\em $R$-bounded Calder\'on--Zygmund estimates}, if the set
\begin{equation}\label{eq:CZ0}
  \{\abs{x-y}^d K(x,y): (x,y)\in\dot\R^{2d}\}\subset\bddlin(X),
\end{equation}
as well as the sets
\begin{equation}\label{eq:CZ1}
  \Big\{ \abs{x-y}^{d+\delta}\frac{ [K(x,y)-K(z,y)]}{\abs{x-z}^\delta} : x,y,z\in\R^d, \abs{x-y}>2\abs{x-z}\Big\}\subset\bddlin(X)
\end{equation}
and
\begin{equation}\label{eq:CZ2}
  \Big\{ \abs{x-y}^{d+\delta}\frac{ [K(y,x)-K(y,z)]}{\abs{x-z}^\delta} : x,y,z\in\R^d, \abs{x-y}>2\abs{x-z}\Big\}\subset\bddlin(X),
\end{equation}
are $R$-bounded, where $\delta\in(0,1]$.

We also define the action of $t:\mathcal Q(\R^d)\times\mathcal Q(\R^d)\to \mathscr B(X,X^*)\simeq\bddlin(X,X^{**})$ by
\begin{equation*}
  t(\phi,\psi)(x,x^*):=B(\phi\otimes x,\psi\otimes x^*),
\end{equation*}
and we say that $t$ satisfies the {\em weak $R$-boundedness property} if
\begin{equation}\label{eq:WBP}
  \Big\{\frac{t(1_Q,1_Q)}{\abs{Q}}:Q\subset\R^d\text{ cube}\Big\}\subset\bddlin(X,X^{**})
\end{equation}
is $R$-bounded.

Finally, we define $t(1,\cdot)$ by its action on $\mathcal Q_0(\R^d):=\{\psi\in \mathcal Q(\R^d):\int\psi=0\}$ by
\begin{equation*}
    t(1,\psi):=t(\chi,\psi)+\iint\pair{[K(x,y)-K(z,y)](1-\chi(y))}{\psi(x)}\ud x\ud y,
\end{equation*}
where $z\in\supp\psi$ and $\chi\in Q(\R^d)$ is any function that is identically $1$ in a neighbourhood of $\supp\psi$. One routinely checks the convergence of the integral for any such $z$ and $\chi$, and the independence of this definition from their particular choice. We say that $t(1,\cdot)=b\in L_{\loc}^1(\R^d;\bddlin(X))$ if
\begin{equation*}
  t(1,\psi)=\int_{\R^d} b(x)\psi(x)\ud x
\end{equation*}
for all $\psi\in\mathcal Q_0(\R^d)$. Analogously, we define $t(\cdot,1)$ and the meaning of $t(\cdot,1)=b\in L_{\loc}^1(\R^d;\bddlin(X))$.

We can now restate our main Theorem \ref{thm:T1intro} more precisely as follows: (Note in particular that the $t(1,\cdot)$ and $t(\cdot,1)$ defined above provide a rigorous meaning for the heuristic notions of ``$T1$'' and ``$(T^*1)^*$'' featuring in Theorem \ref{thm:T1intro}.)

\begin{theorem}\label{thm:TbSymm}
Let $X$ be a UMD spaces  and $p\in(1,\infty)$.
Let $t$ be a bilinear form on $\mathcal Q(\R^d;X)\times\mathcal Q(\R^d;X^*)$ associated with an $R$-bounded Calder\'on--Zygmund kernel $K$. If moreover that $t$ satisfies the weak $R$-boundedness property and the ``symmetric $T(1)$ assumption'' (with the usual, non-dyadic BMO space!)
\begin{equation}\label{eq:TbSymmAss}
  t(1,\cdot)=t(\cdot,1)=b\in\BMO(\R^d;\bddlin(X)),
\end{equation}
then there is a unique $T\in\bddlin(L^p(\R^d;X))$ such that $t(f,g)=\pair{Tf}{g}$ and
\begin{equation}\label{eq:TbSymmConcl}
  \abs{\pair{Tf}{g}}  \lesssim \beta_{p,X}^2\Big(C_T
    +\Norm{b}{\BMO(\R^d;\bddlin(X))}  \Big) \Norm{f}{p}\Norm{g}{q}
\end{equation}
for all $(f,g)\in \mathcal Q(\R^d;X)\times\mathcal Q(\R^d;Y^*)$. Here $C_T$ is the sum of the $R$-bounds of the collections in \eqref{eq:CZ0} through \eqref{eq:WBP}, and the implicit constant in \eqref{eq:TbSymmConcl} depends at most on $d$, $p$, and $\delta$.
\end{theorem}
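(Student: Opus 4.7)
The overall strategy is to invoke the operator-valued dyadic representation theorem of \cite{HH:2016}, which in the setting at hand decomposes the bilinear form $t$ as an expectation over a random choice $\omega$ of dyadic systems $\mathscr{D}_\omega$:
\begin{equation*}
  t(f,g) = \E_\omega\Bigl[\sum_{i,j\geq 0} 2^{-\max(i,j)\delta/2}\pair{S_\omega^{ij}f}{g} + \pair{\Pi_{b_1}^\omega f}{g} + \pair{f}{\Pi_{b_2^*}^\omega g}\Bigr],
\end{equation*}
where the $S_\omega^{ij}$ are normalised dyadic shifts, and the paraproduct symbols satisfy $b_1 = t(1,\cdot)$ and $b_2 = t(\cdot,1)$. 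The $R$-bounded Calder\'on--Zygmund and weak-boundedness hypotheses \eqref{eq:CZ0}--\eqref{eq:WBP} provide precisely the ingredients needed to justify this decomposition in the vector-valued setting, as verified in \cite{HH:2016}.

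Once the decomposition is in place, the paraproduct part is where the symmetry hypothesis \eqref{eq:TbSymmAss} plays its decisive role. Since $b_1 = b_2 = b$, the two paraproducts combine to give the symmetrised paraproduct
\begin{equation*}
  \Pi_{b_1}^\omega + (\Pi_{b_2^*}^\omega)^* = \Pi_b^\omega + (\Pi_{b^*}^\omega)^* = \Lambda_b^\omega,
\end{equation*}
to which Theorem~\ref{thm:parasum} applies (with $X=Y$) giving the bound $c_d \beta_{p,X}^2 \Norm{b}{\BMO_d^\omega(\R^d;\bddlin(X))}$. Because $\Norm{b}{\BMO_d^\omega} \leq C_d \Norm{b}{\BMO(\R^d;\bddlin(X))}$ uniformly in $\omega$ (the dyadic BMO norm is always controlled by the usual BMO norm), the paraproduct contribution is bounded by a constant times $\beta_{p,X}^2 \Norm{b}{\BMO(\R^d;\bddlin(X))} \Norm{f}{p} \Norm{g}{p'}$, uniformly in $\omega$, and the outer expectation over $\omega$ is harmless.

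For the cancellative (dyadic shift) part, no new work is required: the existing UMD-valued estimates in \cite{HH:2016} yield
\begin{equation*}
  \abs{\pair{S_\omega^{ij}f}{g}} \lesssim (1+\max(i,j))^{\alpha}\, \beta_{p,X}^2\, C_T \, \Norm{f}{p}\Norm{g}{p'},
\end{equation*}
for some modest polynomial factor $\alpha$, which is dominated by the geometric weight $2^{-\max(i,j)\delta/2}$ so that the double series in $(i,j)$ converges. Summing the shift bounds and combining with the paraproduct bound from the previous step produces \eqref{eq:TbSymmConcl}. Uniqueness of the extension $T \in \bddlin(L^p(\R^d;X))$ follows from the density of $\mathcal{Q}(\R^d;X)$ and reflexivity of $X$ (which identifies $(L^{p'}(\R^d;X^*))^*$ with $L^p(\R^d;X)$), as discussed in Section~\ref{sec:T1setup}.

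\textbf{Main obstacle.} The technical heart is the correct invocation of the representation theorem from \cite{HH:2016}: one must confirm that its hypotheses are fulfilled in the precise $R$-bounded framework of Section~\ref{sec:T1setup} and, crucially, that the paraproduct symbols it produces really coincide (up to the natural identifications) with $t(1,\cdot)$ and $t(\cdot,1)$ as defined via the truncation \eqref{eq:Bfg}--style formulas. The remaining subtlety is bookkeeping with the random dyadic systems: the paraproduct bound in Theorem~\ref{thm:parasum} is stated for a \emph{fixed} dyadic system, so one needs the uniform comparison $\sup_\omega \Norm{b}{\BMO_d^\omega} \lesssim \Norm{b}{\BMO}$ and the fact that expectations in $\omega$ commute harmlessly with the $L^p$ pairing. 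Both points are routine but essential for converting the purely dyadic Theorem~\ref{thm:parapIntro} into the continuous statement \eqref{eq:TbSymmConcl}.
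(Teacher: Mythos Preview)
Your proposal is correct and follows essentially the same route as the paper: invoke the operator-valued dyadic representation theorem of \cite{HH:2016}, use the symmetry $b_1=b_2=b$ to combine the two paraproducts into $\Lambda_b^{\mathscr D^\omega}$, apply Theorem~\ref{thm:parasum} for that part, and sum the shift bounds against the geometric decay. The only cosmetic differences are in normalisation (the paper factors $C_T$ out of the shifts in the representation formula rather than into the shift bound, and records the polynomial growth in $\max(i,j)$ as linear rather than a generic $(1+\max(i,j))^\alpha$).
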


\section{Proof of the $T(1)$ theorem}\label{sec:T1proof}

Theorem \ref{thm:TbSymm} is actually a relatively quick corollary of Theorem \ref{thm:parasum} and a recent approach to the operator-valued $T(1)$ theorem from \cite{HH:2016}. In fact, most other dyadic approaches to this theorem would work essentially equally well, but the formulation of some intermediate steps in \cite{HH:2016} is perhaps most convenient for quoting as a black box, avoiding the need of repeating the considerations already covered in previous works, although, unfortunately, it is still necessary to make some technical remarks on the applicability of the results to the setting at hand.

We quote the following result from \cite{HH:2016}:

\begin{theorem}[Operator-valued Dyadic Representation; \cite{HH:2016}, Theorems 1 and 2]\label{thm:dyadRep}
Let the assumptions of Theorem \ref{thm:TbSymm} be in force, except that \eqref{eq:TbSymmAss} is replaced by
\begin{equation}\label{eq:TbNonSymmAss}
  t(1,\cdot)=b_1\in\BMO(\R^d;\bddlin(X)),\qquad   t(\cdot,1)=b_2\in\BMO(\R^d;\bddlin(X)).
\end{equation}
For any $\epsilon\in(0,1)$ and $(f,g)\in \mathcal Q(\R^d;X)\times\mathcal Q(\R^d;X^*)$, we have the representation
\begin{equation*}
\begin{split}
    t(f,g)
    =\E_\omega\Big(C_T\sum_{i,j=0}^\infty 2^{1/\epsilon}2^{-(1-\epsilon)\alpha\max\{i,j\}}\Bpair{S_{\mathscr D^\omega}^{ij}f}{g}
      +\Bpair{(\Pi_{b_1}^{\mathscr D^\omega}+(\Pi_{b_2^*}^{\mathscr D^\omega})^*)f}{g}\Big),
\end{split}
\end{equation*}
where
\begin{itemize}
  \item $\E_\omega$ is the expectation over a random selection of the dyadic system $\mathscr D^\omega$;
  \item $C_T$ is the sum of the $R$-bounds of the collections in \eqref{eq:CZ0} through \eqref{eq:WBP};
  \item each $S_{\mathscr D^\omega}^{ij}$ is an operator with the bound
\begin{equation}\label{eq:opShiftBd}
    \Norm{S_{\mathscr D^\omega}^{ij}}{\bddlin(L^p(\R^d;X))}\lesssim(1+\max\{i,j\})\beta_{p,X}^2;
\end{equation}
  \item $\Pi_{b_1}^{\mathscr D^\omega}$ and $\Pi_{b_2^*}^{\mathscr D^\omega}$ are dyadic paraproducts related to the dyadic system $\mathscr D^\omega$.
\end{itemize}
\end{theorem}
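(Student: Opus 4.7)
The approach I have in mind is the standard dyadic-probabilistic strategy that underlies Hyt\"onen's resolution of the $A_2$ conjecture, suitably upgraded to the operator-valued setting where one must work with $R$-bounds rather than uniform bounds and with martingale differences rather than scalar Haar expansions. First I fix the Nazarov--Treil--Volberg parametrisation of random dyadic systems $\mathscr D^\omega$ obtained by shifting a reference grid according to i.i.d.\ parameters $\omega = (\omega_k)_k \in (\{0,1\}^d)^\Z$. For each realisation, $f$ and $g$ admit finite martingale-difference expansions, organised cube-by-cube as $f = \sum_{Q} \D_Q^\omega f$ where $\D_Q^\omega f := 1_Q \D_k^\omega f$ for $Q \in \mathscr{D}_k^\omega$, and similarly for $g$. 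By bilinearity,
\begin{equation*}
  t(f,g) = \sum_{Q,R} t(\D_Q^\omega f,\D_R^\omega g).
\end{equation*}

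Next I would apply the good/bad dichotomy of \cite{NTV:ex}: a cube is \emph{bad} if it sits too close to the boundary of some strictly larger cube in the same grid. Using independence of the shifts at different scales, the $\E_\omega$ average of any sum involving a bad cube cancels by symmetry, so only pairs of good cubes survive. The parameter $\epsilon$ controls how deep a good cube must sit inside each of its ancestors, and this reduction is what contributes the $2^{1/\epsilon}$ prefactor. Assuming $\ell(Q) \leq \ell(R)$ by symmetry, I would split the retained sum into three geometric regimes: (a) $Q$ and $R$ well-separated, where the kernel representation \eqref{eq:Bfg} applies and the $R$-bounded H\"older estimates \eqref{eq:CZ1}--\eqref{eq:CZ2} yield the decay factor $2^{-(1-\epsilon)\alpha\max\{i,j\}}$, with $i,j$ counting dyadic generations below the minimal common ancestor; (b) $Q,R$ comparable in size and adjacent, handled by \eqref{eq:CZ0} combined with the weak $R$-boundedness \eqref{eq:WBP}; and (c) $Q$ strictly inside $R$ with $\D_R^\omega g$ essentially constant on $Q$ (and the mirror case with roles reversed) — the cancellation-free regime that generates the paraproduct.

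In regime (c), reorganising the sum and feeding in the definitions $t(1,\cdot) = b_1$ and $t(\cdot,1) = b_2$ from Section \ref{sec:T1setup} produces exactly $\pair{\Pi_{b_1}^{\mathscr D^\omega} f}{g}$, while the mirror case with the roles of $f,g$ swapped produces $\pair{(\Pi_{b_2^*}^{\mathscr D^\omega})^* f}{g}$. This is where the BMO hypotheses on $b_1,b_2$ in \eqref{eq:TbNonSymmAss} are implicitly used: they guarantee that these paraproducts are meaningful dyadic objects rather than just formal expressions. The remaining cancellative pairs from regimes (a) and (b) are then grouped by the complexity pair $(i,j)$; for each such pair the resulting finite sum is packaged as $\pair{S_{\mathscr D^\omega}^{ij} f}{g}$, where $S^{ij}$ is a generalised dyadic shift whose $\bddlin(X)$-valued coefficients are controlled by the $R$-bound constants from \eqref{eq:CZ0}--\eqref{eq:WBP}, absorbed into $C_T$.

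The main obstacle is the operator-norm bound \eqref{eq:opShiftBd} for each $S_{\mathscr D^\omega}^{ij}$. This is where $R$-boundedness — as opposed to mere uniform boundedness — of the kernel collections becomes indispensable: when estimating $\Norm{S^{ij} f}{L^p(\R^d;X)}$ via a UMD square-function argument, the $\bddlin(X)$-valued coefficients of the shift must be pulled out of Rademacher-weighted sums of martingale differences of $f$, which is exactly what $R$-boundedness permits and ordinary operator bounds would not. The two-sided UMD inequality applied once to $f$ and once to the output produces the factor $\beta_{p,X}^2$, while the factor $1+\max\{i,j\}$ reflects the number of distinct scales on which a shift of complexity $(i,j)$ acts (a logarithmic loss that is dominated by the geometric decay $2^{-(1-\epsilon)\alpha\max\{i,j\}}$ retained outside the shift). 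With \eqref{eq:opShiftBd} in place the double series in $(i,j)$ converges absolutely, and assembling the paraproduct and shift contributions yields the representation.
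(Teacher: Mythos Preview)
The paper does not prove this theorem at all: it is explicitly quoted as a black-box result from \cite{HH:2016} (Theorems 1 and 2 there). The only commentary the paper adds is that the precise structure of the shifts $S_{\mathscr D^\omega}^{ij}$ is irrelevant beyond the bound \eqref{eq:opShiftBd}, and that the test-function class and the a priori $L^p$-boundedness assumption present in \cite{HH:2016} can be removed via the finitary randomisation device of \cite[Section 2.1]{GH:2018}. Your proposal, by contrast, sketches the actual proof of the cited theorem. That is a legitimate thing to do, and your outline---random grids, good/bad reduction, splitting into the separated, adjacent, and nested regimes, packaging the cancellative part into shifts indexed by complexity $(i,j)$, and extracting the two paraproducts from the nested regime---is an accurate high-level summary of the argument in \cite{HH:2016}. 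But it is not what the present paper does, and your write-up does not address the one adaptation the paper \emph{does} flag, namely the passage from the smooth compactly supported test functions and a priori boundedness of \cite{HH:2016} to the cube-indicator test functions $\mathcal Q(\R^d;X)$ used here.

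Two minor corrections to your sketch, for the record. First, the reduction to good cubes is not achieved by ``cancellation by symmetry'' of the bad part under $\E_\omega$; rather, one uses that $t(f,g)$ is deterministic while the goodness of a cube is random and independent of its spatial position, allowing one to insert the good-cube restriction at the cost of the factor $\pi_{\mathrm{good}}^{-1}$ (this is the true origin of the $2^{1/\epsilon}$-type constant). Second, the reference \cite{NTV:ex} in this paper's bibliography is the Nazarov--Treil--Volberg \emph{counterexample} paper on the Carleson embedding, not the source of the good/bad machinery; that comes from their non-homogeneous $Tb$ work. Neither point affects the overall correctness of your strategy.
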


We note that the operators $S_{\mathscr D^\omega}^{ij}$ are so-called {\em operator-valued dyadic shifts} of {\em complexity type} $(i,j)$ associated with the dyadic system $\mathscr D^\omega$, and \eqref{eq:opShiftBd} states the bound for these operators contained in \cite[Theorem 1]{HH:2016}; for the present purposes, the precise form of the $S_{\mathscr D^\omega}^{ij}$ is irrelevant, and we only care about this bound.

In \cite[Theorem 2]{HH:2016}, the Dyadic Representation Theorem \ref{thm:dyadRep} is stated for a different class of test functions (namely, $(f,g)\in(C^1_c(\R^d)\otimes X,C^1_c(\R^d)\otimes X^*)$) and under the qualitative {a priori} assumption that $t(f,g)=\pair{Tf}{g}$ for some $T\in\bddlin(L^p(\R^d;X))$. However, these are only technical auxiliary assumptions used to legitimate the formal manipulations leading to the desired representation. The same representation formula can also be achieved without the {a priori} $L^p(\R^d;X)$-boundedness by using our test functions, and a simple finitary variant of the underlying randomisation process explained in \cite[Section 2.1]{GH:2018}.

We are now ready for:

\begin{proof}[Proof of Theorem \ref{thm:TbSymm}]
The assumptions of Theorem \ref{thm:TbSymm} correspond to those of the Dyadic Representation Theorem \ref{thm:dyadRep} with $b_1=b_2=b$; thus
\begin{equation*}
  \Pi_{b_1}^{\mathscr D^\omega}+(\Pi_{b_2^*}^{\mathscr D^\omega})^*
  =\Pi_{b}^{\mathscr D^\omega}+(\Pi_{b^*}^{\mathscr D^\omega})^*
  =\Lambda_b^{\mathscr D^\omega}
\end{equation*}
is an operator of the form considered in Theorem \ref{thm:parasum}. The conclusion of the Dyadic Representation Theorem \ref{thm:dyadRep} then takes the form
\begin{equation*}
   t(f,g)
    =\E_\omega\Big(C_T\sum_{i,j=0}^\infty 2^{1/\epsilon}2^{-(1-\epsilon)\alpha\max\{i,j\}}\Bpair{S_{\mathscr D^\omega}^{ij}f}{g}
      +\Bpair{\Lambda_{b}^{\mathscr D^\omega}f}{g}\Big).
\end{equation*}
Using the estimate of Theorem \ref{thm:dyadRep} for $S_{\mathscr D^\omega}^{ij}$ and Theorem \ref{thm:parasum} for $\Lambda_{b}^{\mathscr D^\omega}$, we have
\begin{equation*}
\begin{split}
  \abs{t(f,g)}
  &\lesssim\E_\omega\Big(C_T\sum_{i,j=0}^\infty 2^{1/\epsilon}2^{-(1-\epsilon)\alpha\max\{i,j\}}
    (1+\max\{i,j\})\beta_{p,X}^2 \\
    &\qquad\qquad +\beta_{p,X}^2\Norm{b}{\BMO(\R^d;\bddlin(X))}\Big)\Norm{f}{L^p(\R^d;X)}\Norm{g}{L^{p'}(\R^d;X^*)} \\
  &\lesssim\beta_{p,X}^2\Big(C_T+\Norm{b}{\BMO(\R^d;\bddlin(X))}\Big)\Norm{f}{L^p(\R^d;X)}\Norm{g}{L^{p'}(\R^d;X^*)}
\end{split}
\end{equation*}
by summing up a convergent series in the last step.

This is the asserted bound and completes the proof.
\end{proof}

%\bibliography{t1}

\begin{thebibliography}{10}

\bibitem{BMV}
A.~B\'{e}nyi, D.~Maldonado, and V.~Naibo.
\newblock What is {$\ldots$} a paraproduct?
\newblock {\em Notices Amer. Math. Soc.}, 57(7):858--860, 2010.

\bibitem{Blasco:88}
O.~Blasco.
\newblock Hardy spaces of vector-valued functions: duality.
\newblock {\em Trans. Amer. Math. Soc.}, 308(2):495--507, 1988.

\bibitem{BlaPo:08}
O.~Blasco and S.~Pott.
\newblock Embeddings between operator-valued dyadic {BMO} spaces.
\newblock {\em Illinois J. Math.}, 52(3):799--814, 2008.

\bibitem{BlaPo:10}
O.~Blasco and S.~Pott.
\newblock Operator-valued dyadic {BMO} spaces.
\newblock {\em J. Operator Theory}, 63(2):333--347, 2010.

\bibitem{Bony:81}
J.-M. Bony.
\newblock Calcul symbolique et propagation des singularit\'{e}s pour les
  \'{e}quations aux d\'{e}riv\'{e}es partielles non lin\'{e}aires.
\newblock {\em Ann. Sci. \'{E}cole Norm. Sup. (4)}, 14(2):209--246, 1981.

\bibitem{Bourgain:86}
J.~Bourgain.
\newblock Vector-valued singular integrals and the {$H^1$}-{BMO} duality.
\newblock In {\em Probability theory and harmonic analysis ({C}leveland,
  {O}hio, 1983)}, volume~98 of {\em Monogr. Textbooks Pure Appl. Math.}, pages
  1--19. Dekker, New York, 1986.

\bibitem{Calderon:65}
A.-P. Calder\'{o}n.
\newblock Commutators of singular integral operators.
\newblock {\em Proc. Nat. Acad. Sci. U.S.A.}, 53:1092--1099, 1965.

\bibitem{DJ:T1}
G.~David and J.-L. Journ\'{e}.
\newblock A boundedness criterion for generalized {C}alder\'{o}n-{Z}ygmund
  operators.
\newblock {\em Ann. of Math. (2)}, 120(2):371--397, 1984.

\bibitem{DJS:Tb}
G.~David, J.-L. Journ\'{e}, and S.~Semmes.
\newblock Op\'{e}rateurs de {C}alder\'{o}n-{Z}ygmund, fonctions
  para-accr\'{e}tives et interpolation.
\newblock {\em Rev. Mat. Iberoamericana}, 1(4):1--56, 1985.

\bibitem{GV:2017}
C.~Gallarati and M.~Veraar.
\newblock Maximal regularity for non-autonomous equations with measurable
  dependence on time.
\newblock {\em Potential Anal.}, 46(3):527--567, 2017.

\bibitem{GMS}
S.~Geiss, S.~Montgomery-Smith, and E.~Saksman.
\newblock On singular integral and martingale transforms.
\newblock {\em Trans. Amer. Math. Soc.}, 362(2):553--575, 2010.

\bibitem{GH:2018}
A.~Grau de~la Herr\'{a}n and T.~Hyt\"{o}nen.
\newblock Dyadic representation and boundedness of nonhomogeneous
  {C}alder\'{o}n-{Z}ygmund operators with mild kernel regularity.
\newblock {\em Michigan Math. J.}, 67(4):757--786, 2018.

\bibitem{HH:2016}
T.~S. H\"{a}nninen and T.~P. Hyt\"{o}nen.
\newblock Operator-valued dyadic shifts and the {$T(1)$} theorem.
\newblock {\em Monatsh. Math.}, 180(2):213--253, 2016.

\bibitem{HLM}
G.~Hong, H.~Liu, and T.~Mei.
\newblock An operator-valued {$T1$} theory for symmetric {CZO}s.
\newblock {\em J. Funct. Anal.}, 278(7):108420, 27, 2020.

\bibitem{Hytonen:Tb}
T.~Hyt\"{o}nen.
\newblock An operator-valued {$Tb$} theorem.
\newblock {\em J. Funct. Anal.}, 234(2):420--463, 2006.

\bibitem{Hytonen:revisit}
T.~Hyt\"{o}nen.
\newblock Vector-valued singular integrals revisited---with random dyadic
  cubes.
\newblock {\em Bull. Pol. Acad. Sci. Math.}, 60(3):269--283, 2012.

\bibitem{HNVW1}
T.~Hyt\"{o}nen, J.~van Neerven, M.~Veraar, and L.~Weis.
\newblock {\em Analysis in {B}anach spaces. {V}ol. {I}. {M}artingales and
  {L}ittlewood-{P}aley theory}, volume~63 of {\em Ergebnisse der Mathematik und
  ihrer Grenzgebiete. 3. Folge}.
\newblock Springer, Cham, 2016.

\bibitem{HNVW2}
T.~Hyt\"{o}nen, J.~van Neerven, M.~Veraar, and L.~Weis.
\newblock {\em Analysis in {B}anach spaces. {V}ol. {II}. {P}robabilistic
  methods and operator theory}, volume~67 of {\em Ergebnisse der Mathematik und
  ihrer Grenzgebiete. 3. Folge}.
\newblock Springer, Cham, 2017.

\bibitem{HW:2006}
T.~Hyt\"{o}nen and L.~Weis.
\newblock A {$T1$} theorem for integral transformations with operator-valued
  kernel.
\newblock {\em J. Reine Angew. Math.}, 599:155--200, 2006.

\bibitem{HW:2007}
T.~Hyt\"{o}nen and L.~Weis.
\newblock Singular convolution integrals with operator-valued kernel.
\newblock {\em Math. Z.}, 255(2):393--425, 2007.

\bibitem{Hytonen:A2}
T.~P. Hyt\"{o}nen.
\newblock The sharp weighted bound for general {C}alder\'{o}n-{Z}ygmund
  operators.
\newblock {\em Ann. of Math. (2)}, 175(3):1473--1506, 2012.

\bibitem{Katz:parap}
N.~H. Katz.
\newblock Matrix valued paraproducts.
\newblock {\em J. Fourier Anal. Appl.}, 3:913--921, 1997.

\bibitem{Mei:parap}
T.~Mei.
\newblock Notes on matrix valued paraproducts.
\newblock {\em Indiana Univ. Math. J.}, 55(2):747--760, 2006.

\bibitem{NPTV}
F.~Nazarov, G.~Pisier, S.~Treil, and A.~Volberg.
\newblock Sharp estimates in vector {C}arleson imbedding theorem and for vector
  paraproducts.
\newblock {\em J. Reine Angew. Math.}, 542:147--171, 2002.

\bibitem{NTV:ex}
F.~Nazarov, S.~Treil, and A.~Volberg.
\newblock Counterexample to the infinite-dimensional {C}arleson embedding
  theorem.
\newblock {\em C. R. Acad. Sci. Paris S\'{e}r. I Math.}, 325(4):383--388, 1997.

\bibitem{Pisier:fact}
G.~Pisier.
\newblock {\em Factorization of linear operators and geometry of {B}anach
  spaces}, volume~60 of {\em CBMS Regional Conference Series in Mathematics}.
\newblock Published for the Conference Board of the Mathematical Sciences,
  Washington, DC; by the American Mathematical Society, Providence, RI, 1986.

\bibitem{PS:2014}
S.~Pott and A.~Stoica.
\newblock Linear bounds for {C}alder\'{o}n-{Z}ygmund operators with even kernel
  on {UMD} spaces.
\newblock {\em J. Funct. Anal.}, 266(5):3303--3319, 2014.

\bibitem{Rydhe:17}
E.~Rydhe.
\newblock Vectorial {H}ankel operators, {C}arleson embeddings, and notions of
  {BMOA}.
\newblock {\em Geom. Funct. Anal.}, 27(2):427--451, 2017.

\bibitem{TV:angle}
S.~Treil and A.~Volberg.
\newblock Wavelets and the angle between past and future.
\newblock {\em J. Funct. Anal.}, 143(2):269--308, 1997.

\bibitem{Weis:2001}
L.~Weis.
\newblock Operator-valued {F}ourier multiplier theorems and maximal
  {$L_p$}-regularity.
\newblock {\em Math. Ann.}, 319(4):735--758, 2001.

\end{thebibliography}
%\bibliographystyle{abbrv}

\end{document}